\newtheorem{theorem}{Theorem}[section]
\newtheorem{lemma}[theorem]{Lemma}
\newtheorem{proposition}[theorem]{Proposition}
\newtheorem{corollary}[theorem]{Corollary}
\theoremstyle{definition}
\newtheorem{definition}[theorem]{Definition}
\theoremstyle{remark}
\numberwithin{equation}{section}
\def\f#1#2{\frac{#1}{#2}}
\def\mc#1{\mathcal{#1}}
\def\td{\tilde}
\def\a{\alpha}
\def\p#1{\partial #1}
\def\De{\Delta}
\def\G{\Gamma}
\def\g{\gamma}
\def\la{\lambda}
\def\La{\Lambda}
\def\Om{\Omega}
\def\R{\Bbb{R}}
\def\lan{\langle}
\def\ran{\rangle}
\def\ra{\rightarrow}
\def\ol{\overline}
\begin{document}
\setcounter{page}{1}
\centerline{}
\centerline{}

\title[]{The uniqueness of minimal maps
into Cartan-Hadamard manifolds via the squared singular values}

\author{Zhiwei Jia}
\address{School of Mathematical Sciences, Fudan University, Shanghai, China}
\email{\texttt{zwjia18@fudan.edu.cn} (Zhiwei Jia)}

\author{Minghao Li}
\address{Shanghai Center for Mathematical Sciences, Fudan University, Shanghai, China}
\email{\texttt{mhli19@fudan.edu.cn} (Minghao Li)}

\author{Ling Yang}
\address{School of Mathematical Sciences, Fudan University, Shanghai, China}
\address{Shanghai Center for Mathematical Sciences, Fudan University, Shanghai, China}
\email{\texttt{yanglingfd@fudan.edu.cn} (Ling Yang)}

\begin{abstract}
In this paper, we give a uniqueness theorem for the Dirichlet problem of minimal maps into general Riemannian manifolds with non-positive sectional curvature,
improving \cite[Theorem 5.2]{Lee.2019}. The proof of this theorem is based on the convexity of several functions in terms of squared singular values
along the geodesic homotopy of two given minimal maps.

\end{abstract}
\maketitle

\section{Introduction}
Let $f=(f^1,\cdots,f^n)$ be a vector-valued function on a domain $\Om\subset \R^m$, 
$\G(f):=\{(x,f(x)):x\in \Om\}$
be the {\it graph} of $f$, then $\G_f$ is a {\it minimal submanifold} (i.e. the mean curvature vectors vanish everywhere) of $\R^{m+n}$ if and only if $f$
satisfies the minimal surface system as follows:
\begin{equation}
    \sum\limits _{i,j=1}^{m} \frac{\partial}{\partial{x^{i}}} \left( \sqrt{g}g^{ij} \frac{\partial{f^{\alpha}}}{\partial{x^{j}}}\right)=0, \qquad
    \forall \alpha = 1,\cdots, n, \label{eq.1}\\
\end{equation}
where $g_{ij}:=\delta_{ij}+\sum\limits_{\alpha=1}^{n}\frac{\partial f^{\alpha}}{\partial x^i}\frac{\partial f^{\alpha}}{\partial x^j}$,
$(g^{ij})$ is the inverse of $(g_{ij})$ and $g := \det(g_{ij})$. $\De_f:=\sqrt{g}$ is called the {\it slope} of $f$.
As a generalization, Schoen \cite{Schoen. 1993} introduced the conception of {\it minimal map}: Let $(M,g_M)$ and $(N,g_N)$ be two Riemannian manifolds of dimension $m$ and $n$, respectively,
then a smooth map $f:M\ra N$ is a minimal map if and only if its graph is a minimal submanifold in the product manifold $M\times N$.
The research on minimal graphs has a long and fertile history and a lot of works focus on the {\it Dirichlet problem}:
{\it Given a bounded domain $\Om$ in $M$ and a map $\phi$ from the boundary of $\Om$ into $N$, what kind of and how many minimal maps exist,
so that each one $f:\Om\ra N$ satisfies $f|_{\p \Om}=\phi$.}

For the case of minimal graphs of codimension 1, i.e. $N=\R$, {\it the solution to the Dirichlet problem is unique (if exists),
and the graph of this solution is area-minimizing}, i.e., the volume of this submanifold takes the minimum along all
submanifolds with the graph of $\phi$ as their boundary. These conclusions can be derived from the convexity of the volume functional
(see e.g. \cite[Chap. 12]{Giusti-Williams. 1984}). Unfortunately, these beautiful results cannot generalized to higher codimensional case,
i.e. $n \geq 2$. In their pioneering paper \cite{LO.1977}, Lawson-Osserman constructed distinct minimal maps $f_1,f_2,f_3$ from the unit disk $\Bbb{D}$
in $\R^2$ to $\R^2$ sharing the same boundary data, where $f_3$ corresponds to an unstable minimal graph, i.e., even a little deformation
can decrease its area. Afterwards, by making systematic developments on Lawson-Osserman
constructions, it was shown in \cite{XYZ. 2019} that there exist uncountable many vector-valued functions on
Euclidean spheres, so that we can find infinitely many solutions to the Dirichlet problem with
each of them as the boundary data. Therefore, it is usually referred to a restricted class of submanifolds when we talk about the uniqueness
and stability of minimal graphs
for the higher codimensional case.

From the viewpoint of singular values, Lee, Wang, Tsui, Ooi and other mathematicians \cite{Lee.2003, Lee.2008, Lee.2014, Lee.2019, Li.2023} studied this problem, making a series of successive works.
Let $df$ be the tangent map from $T_x M$ into $T_{f(x)}N$, then for any non-negative number $\la$,
$\la$ is a {\it singular value} of $df$ at $x$, if and only if $\la^2$ is a critical value of the squared norm function
\begin{equation}
v\mapsto g_N(df(v),df(v))\qquad (\forall v\in T_x M, |v|=1).
\end{equation}
By the theory of singular value decomposition, the slope of $f$ can be written as
\begin{equation}
\De_f=\prod_{i=1}^m (1+\la_i^2)^{\f{1}{2}},
\end{equation}
where $\la_1\geq \cdots\geq \la_m$ are all singular values of $df$ at $x$. 
As shown in \cite{Lee.2014},
\begin{equation}
(x_1,\cdots,x_m)\mapsto \prod_{i=1}^m (1+x_i^2)^{\f{1}{2}}
\end{equation}
is a strictly convex (or convex) function on $\mc{M}$ (or $\ol{\mc{M}}$), where $\mc{M}$ consists of all vectors in $\R_{\geq 0}^m:=[0,+\infty)^m$ satisfying
\begin{eqnarray}
    & x _i x _j < 1,\qquad \forall i \ne j, & \\
    & \prod\limits _{i=1}^{m} (1-x_i^2) + \sum\limits _{i=1}^{m} (1-x_1^2) \cdots x_i^2 \cdots (1-x_m^2) > 0, &
\end{eqnarray}
and $\ol{\mc{M}}$ is the closure of $\mc{M}$. Moreover, in conjunction of the second variation formula of the volume functional in terms of singular values
(see \cite{Lee.2008}), criteria for the stability and uniqueness of minimal graphs can be established as follows.
\begin{theorem}\cite{Lee.2014}
Let $f:M\ra N$ be a minimal map, where $N$ has non-positive sectional curvature everywhere, 
 then the graph of $f$ is stable (or weakly stable)
whenever the singular value vector $(\la_1,\cdots,\la_m)\in \mc{M}$ (or $\ol{\mc{M}}$) everywhere on $M$.
\end{theorem}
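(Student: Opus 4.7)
The plan is to apply the second variation formula for the volume of minimal submanifolds to the graph $\G_f\subset M\times N$ and then exploit the convexity of the slope function on $\mc{M}$ stated just above. The goal is to show that, under the curvature and singular-value hypotheses, the Jacobi quadratic form on normal vector fields along $\G_f$ is positive (respectively non-negative) definite, which is precisely the stability (respectively weak stability) of $\G_f$.

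At each $x\in M$, I would use the singular value decomposition of $df_x$ to pick orthonormal frames $\{e_i\}_{i=1}^m$ of $T_xM$ and $\{\ep_\a\}_{\a=1}^n$ of $T_{f(x)}N$ with $df(e_i)=\la_i\ep_i$, and assemble from these the adapted orthonormal tangent frame $\tau_i=(e_i+\la_i\ep_i)/\sqrt{1+\la_i^2}$ and a matching orthonormal normal frame $\{\nu_a\}$ on $\G_f$. In these frames the second fundamental form, the ambient Riemann tensor, and the SVD data simultaneously take their simplest algebraic form, and the second variation formula of \cite{Lee.2008} becomes tractable.

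Next I would write $\delta^2\V(V)$ for a compactly supported normal variation $V$ of $\G_f$ as the usual integral of $|\n^\perp V|^2$ minus the squared-second-fundamental-form term and the ambient curvature term. The curvature term splits into contributions from $M$ and from $N$, with the $N$-piece carrying positive weights built from the singular values; by the hypothesis of non-positive sectional curvature on $N$, this piece has a sign favorable to stability and can be discarded from below. After invoking the minimality equation $\sum_i h_{aii}=0$ and integrating by parts, the residue is a quadratic form whose symbol is, up to positive scalar factors, the Hessian of the slope function $F(x_1,\cdots,x_m)=\prod_{i=1}^m(1+x_i^2)^{\f{1}{2}}$ at $(\la_1,\cdots,\la_m)$, contracted against the components of $V$. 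Because $(\la_1,\cdots,\la_m)\in\mc{M}$ (respectively $\ol{\mc{M}}$), $\Hess F$ is positive (respectively positive semi-)definite, so $\delta^2\V(V)>0$ (respectively $\geq 0$) for every non-trivial $V$, as required.

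The main obstacle is the algebraic matching in the last step: the residual integrand has many interacting terms, and identifying its symbol with $\Hess F$ at the singular value vector requires a careful orchestration of the index-permutation symmetries of $h_{aij}$, precise bookkeeping of the denominators $\sqrt{1+\la_i^2}$ introduced by the adapted tangent frame, and repeated use of Cauchy--Schwarz. It is exactly at this step that the defining constraints of $\mc{M}$ --- namely $x_ix_j<1$ for $i\neq j$ together with the polynomial positivity displayed just above --- translate into the definiteness of the Hessian.
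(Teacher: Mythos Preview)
The paper does not supply its own proof of this theorem: it is quoted from \cite{Lee.2014} as background, and the only related computation here is the second variation of volume along a \emph{geodesic homotopy} in Section~\ref{S2}, which already defers to \cite{Lee.2014} for the non-negativity of the pieces $(i)$ and $(ii)$ in \eqref{eq.32}--\eqref{eq.33}. So there is no in-paper proof to compare against; your outline is in effect a sketch of the argument from the cited reference.

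That sketch is on the right track --- adapted SVD frames, the second variation formula, the $K_N\le 0$ hypothesis to dispose of the ambient curvature term, and reduction of the remainder to the convexity of the slope function --- but one point should be sharpened. The residual quadratic form does not reduce \emph{entirely} to $\Hess F$. In the decomposition \eqref{eq.32}--\eqref{eq.34}, only the ``diagonal'' block $(i)$, built from the $p_{ii}$, has matrix proportional to $\Hess F$ at $(\la_1,\dots,\la_m)$. The ``off-diagonal'' block $(ii)$, built from the pairs $(p_{ij},p_{ji})$ with $i\neq j$, is a sum of independent $2\times 2$ forms $p_{ij}^2+p_{ji}^2-2\la_i\la_j\,p_{ij}p_{ji}$, each positive precisely when $\la_i\la_j<1$; and block $(iii)$ is manifestly non-negative. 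Thus the two defining constraints of $\mc{M}$ enter at two different places: the pairwise inequality $\la_i\la_j<1$ controls $(ii)$ directly, while the polynomial inequality (together with the pairwise one) is what makes $\Hess F$ positive definite and hence controls $(i)$. Your last paragraph collapses all of this into a single Hessian statement, which obscures the mechanism. Also bear in mind that stability concerns \emph{all} compactly supported normal variations of $\G_f$, whereas the computation in Section~\ref{S2} is written only for variations through graphs; for a graph the two are equivalent, but that identification should be stated rather than assumed.
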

\begin{theorem}\label{thm-LYZ}\cite{Li.2023}
Suppose $f_0,f_1:\Om (\subset M) \ra \R^n$ are both minimal maps with the same boundary data,
and the singular value vectors of both $f_0$ and $f_1$ all lie in a fixed symmetric convex subset of $\ol{\mc{M}}$, then $f_0=f_1$.
\end{theorem}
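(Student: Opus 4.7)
My plan is to introduce the geodesic homotopy
\[
f_s := (1-s)f_0 + s f_1, \qquad s \in [0,1],
\]
which reduces to ordinary linear interpolation because the target $\R^n$ is flat, and to study the volume functional
\[
\mc V(s) := \int_\Om \De_{f_s}\, dV_M.
\]
The difference $V := f_1 - f_0$ vanishes on $\p \Om$ by hypothesis, so $V$ is an admissible variational field for both $f_0$ and $f_1$. Since a map is minimal iff its graph is a critical point of the volume functional among graphs with the prescribed boundary, the minimality of $f_0$ and $f_1$ gives $\mc V'(0) = \mc V'(1) = 0$.

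The heart of the proof is to establish that $s \mapsto \De_{f_s}(x)$ is a convex function of $s \in [0,1]$ for each $x \in \Om$, strictly so wherever $df_0(x) \ne df_1(x)$. Writing $\De_{f_s}^2 = \det\bigl(\Id + df_s^T df_s\bigr)$ and observing that $df_s$ is linear in $s$, at each fixed $x$ one can simultaneously put $df_0$ and $df_1$ into a convenient form via SVD-adapted frames, expressing $\De_{f_s}(x)$ as an explicit algebraic function of $s$ and of the singular values $\la_i(s)$ of $df_s$. The pointwise convexity should then follow, via the chain rule, from the convexity of $(x_1,\ldots,x_m) \mapsto \prod_i (1+x_i^2)^{1/2}$ on $\ol{\mc M}$ proved in \cite{Lee.2014}, provided the singular value trajectory $s \mapsto (\la_1(s),\ldots,\la_m(s))$ remains in $\ol{\mc M}$. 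The symmetric convex subset hypothesis is precisely what secures this confinement; symmetry is essential because SVD diagonalizes only up to permutation of the $\la_i$.

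Granted the pointwise convexity, $\mc V$ is convex on $[0,1]$, and $\mc V'(0) = \mc V'(1) = 0$ forces $\mc V' \equiv 0$, hence $\mc V$ constant. The strict convexity of the integrand at every $x$ with $df_0(x) \ne df_1(x)$ is then incompatible with $\mc V$ being constant unless $df_0 \equiv df_1$ on $\Om$; combined with $V|_{\p\Om} = 0$ this yields $V \equiv 0$, i.e.\ $f_0 = f_1$.

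The hardest step should be the convexity assertion itself, because the singular values $\la_i(s)$ depend on $s$ in a genuinely nonlinear way, being roots of the characteristic polynomial of the quadratic matrix polynomial $df_s^T df_s$, even though $df_s$ is linear in $s$. One must therefore track these roots with care, and verify both that the symmetric convex subset assumption keeps the trajectory inside $\ol{\mc M}$ and that the resulting composition is truly convex in $s$. The strict convexity of the product function on the interior $\mc M$ (from \cite{Lee.2014}) should furnish the final upgrade from ``$\mc V$ constant'' to the uniqueness conclusion $f_0=f_1$.
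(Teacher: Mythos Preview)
Your overall architecture is exactly the paper's: linear homotopy $f_s$, observe $\mc V'(0)=\mc V'(1)=0$ by minimality, show $\mc V''\geq 0$, and extract $f_0=f_1$ from the equality case. The gaps lie in the two steps you flag as hardest.

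\textbf{The chain-rule argument for convexity does not work.} You write $\De_{f_s}=F(\la(s))$ with $F(x)=\prod_i(1+x_i^2)^{1/2}$ and hope that convexity of $F$ on $\ol{\mc M}$ plus confinement $\la(s)\in\ol{\mc M}$ yields convexity in $s$. But the chain rule gives
\[
\frac{d^2}{ds^2}F(\la(s))=\sum_{i,j}\frac{\p^2 F}{\p x_i\p x_j}\la_i'\la_j'+\sum_i\frac{\p F}{\p x_i}\la_i'',
\]
and the second sum has no sign because the $\la_i(s)$ are genuinely nonlinear (as you yourself note). Nor can you ``simultaneously put $df_0$ and $df_1$ into a convenient form via SVD-adapted frames'': two linear maps generically cannot be simultaneously diagonalized, and the cross-terms this produces are exactly the point. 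What the paper (following \cite{Lee.2008,Lee.2014}) does instead is compute $\frac{d^2}{dt^2}\sqrt{\det g_{ij}(t)}$ directly in a frame adapted to $df_t$ alone, obtaining an explicit quadratic form in the coefficients $p_{i\a}=\lan\nabla_{df_t(a_i)}V,b_\a\ran$ (the terms $(i)$--$(v)$ in \S\ref{S2}). The condition $\la(t)\in\ol{\mc M}$ then enters not via a chain rule but as the algebraic hypothesis under which each of these quadratic pieces is nonnegative; in particular, the convexity of $F$ on $\ol{\mc M}$ is precisely what makes the diagonal block $(i)$ nonnegative.

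\textbf{The confinement step also needs an argument you have not supplied.} That $\la(0),\la(1)$ lie in a symmetric convex set $\mc C\subset\ol{\mc M}$ does not by itself force $\la(s)\in\ol{\mc M}$, because the curve $s\mapsto\la(s)$ is not the segment from $\la(0)$ to $\la(1)$. The mechanism the paper uses is majorization: one first proves that each partial sum $s\mapsto\sum_{i=1}^k\la_i(s)$ is convex (this is where flatness of $\R^n$ is used; in the curved setting one works with $\sum\la_i^2$ instead), so that $\la(s)\prec_m\mu(s)$ where $\mu(s)$ is the linear interpolation. Since $\mu(s)\in\mc C$ by convexity, one then shows $W(\mu(s))\subset\ol{\mc M}$ via a separate lemma exploiting symmetry. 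Your sentence ``the symmetric convex subset hypothesis is precisely what secures this confinement'' is the right intuition, but the majorization bridge is the missing idea.

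Finally, the passage from $\mc V''\equiv 0$ to $f_0=f_1$ is not quite ``strict convexity of $F$ on $\mc M$'': the singular-value vector can sit on $\p\mc M$, and one must instead read off $p_{i\a}=0$ from the vanishing of each term $(i)$--$(iii)$ separately (with a further case analysis when some $\la_i=1$), concluding that $V$ is parallel along $\G_{f_t}$ and hence identically zero by the boundary condition.
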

To prove Theorem \ref{thm-LYZ}, it is natural to consider the {\it geodesic homopoty} $\{f_t:t\in [0,1]\}$ of $f_0$ and $f_1$,
such that for each $x\in M$, $\g_x(t):=f_t(x)$ is a geodesic connecting $f_0(x)$ and $f_1(x)$.
Along each $\g_x$, denote by $\la(t):=(\la_1(t),\cdots,\la_m(t))$ the singular value vector function,
then it is easy to verify that $\sum\limits_{i=1}^k \la_i(t)$ is a convex function for each $1\leq k\leq m$.
Based on this fundamental fact, we can show the singular value vectors of $f_t$ still lie in $\ol{\mc{M}}$,
and then $f_0=f_1$ follows from the second variation formula. However, if we replace the target manifold of minimal maps
by a general Riemannian manifold $N$ with non-positive sectional curvature, $\sum\limits_{i=1}^k \la_i(t)$
is no longer convex, so the above scheme is not feasible.

To overcome this obstruction, we consider the squared singular value vector
\begin{equation}
\la^2(df):=(\la_1^2,\cdots,\la_m^2)
\end{equation}
in the present paper.
Let $\ol{\mathcal{N}}$ be a subset of $\mathbb{R} _{\ge 0}^m$, which consists of all such vectors $a:=(a_1,\cdots,a_m)$
satisfying the following two conditions:
\begin{eqnarray}
    & a_ia_j \leq 1,\qquad \forall i \ne j, & \label{eq.29}\\
    & \prod\limits _{i=1}^{m} (1-a_i) + \sum\limits _{i=1}^{m} (1-a_1) \cdots a_i \cdots (1-a_m) \geq 0. & \label{eq.30}
\end{eqnarray}
Then obviously
\begin{equation}
\la^2(df)\in \ol{\mathcal{N}}\Longleftrightarrow \la(df)\in \ol{\mc{M}}.
\end{equation}
The main goal of this paper is the proof of the following criterion for the uniqueness of minimal maps into Riemannian manifolds
with nonpositive sectional curvature, which gives some corollaries.
\begin{theorem}\label{main-thm}
    Let $N$ be a complete
    Riemannian manifold with non-positive sectional curvature,
    and $f_0,f_1:\Omega (\subset M) \rightarrow N$ be minimal maps with the same boundary data.
    If $f_0$ is homotopic to $f_1$, and both $\lambda ^2(df_0)$ and $\lambda ^2(df_1)$ lie in a symmetric convex set $\mc C\subset \overline{\mathcal{N}}$, then $f_0=f_1$.
\end{theorem}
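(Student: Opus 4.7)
The plan is to adapt the strategy behind Theorem \ref{thm-LYZ}, replacing the ordinary singular values with their squares. First, I would construct a canonical geodesic homotopy $\{f_t\}_{t\in[0,1]}$ from $f_0$ to $f_1$: using the hypothesis that $f_0$ is homotopic to $f_1$ relative to $\partial\Omega$, lift the homotopy to the universal cover of $N$ (a Cartan--Hadamard manifold) and replace each pointwise path by the unique minimizing geodesic $\gamma_x$ in its homotopy class, setting $f_t(x):=\gamma_x(t)$. The agreement on $\partial\Omega$ forces $\gamma_x$ to be constant for $x\in\partial\Omega$, so $f_t|_{\partial\Omega}\equiv\phi$ for every $t$.

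The main analytic input is the convexity in $t$ of the ordered partial sums $\sum_{i=1}^k\lambda_i^2(df_t)$ for $1\leq k\leq m$ (with $\lambda_1^2\geq\cdots\geq\lambda_m^2$). For any fixed $v\in T_xM$, the vector field $J(t):=df_t(v)|_x$ along $\gamma_x$ satisfies the Jacobi-type identity $\nabla_t^2 J=R^N(\dot\gamma_x,J)\dot\gamma_x$, so the nonpositivity of the sectional curvature of $N$ yields
\begin{equation*}
\frac{d^2}{dt^2}g_N(J,J)=2|\nabla_t J|^2-2\langle R^N(J,\dot\gamma_x)\dot\gamma_x,J\rangle\geq 0.
\end{equation*}
By Ky Fan's min--max principle, $\sum_{i=1}^k\lambda_i^2(df_t)$ equals the supremum of $\sum_{i=1}^k g_N(df_t(e_i),df_t(e_i))$ over all $k$-dimensional subspaces $V\subset T_xM$ with orthonormal basis $\{e_i\}$, so as a supremum of convex functions it is itself convex in $t$.

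Partial-sum convexity, combined with the symmetric convexity of $\mathcal{C}$, is then used to confine $\lambda^2(df_t)$ in $\overline{\mathcal{N}}$ for every $t\in[0,1]$. Once this trapping is established, the graph of each $f_t$ is weakly stable by the stability criterion cited above, so the second variation along the geodesic variation $\dot\gamma_x$ is nonnegative: the volume $V(t):=\int_\Omega\Delta_{f_t}\,dV$ satisfies $V''(t)\geq 0$. Since $f_0$ and $f_1$ are minimal, $V'(0)=V'(1)=0$, and convexity forces $V$ to be constant. The strict convexity of the slope in the interior of $\overline{\mathcal{N}}$, together with the equality case of the second-variation inequality, then forces $\dot\gamma_x\equiv 0$ for almost every $x$, and a continuity argument concludes $f_0=f_1$.

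The main obstacle is the trapping step: deducing $\lambda^2(df_t)\in\overline{\mathcal{N}}$ from the partial-sum convexity together with the symmetric convex hypothesis on $\mathcal{C}$. The defining inequalities of $\overline{\mathcal{N}}$, such as $a_ia_j\leq 1$, are not convex in $a$, so the argument must exploit the symmetric convexity of $\mathcal{C}$ via Hardy--Littlewood--P\'olya majorization, or via auxiliary symmetric convex functions whose sublevel sets suitably cover $\mathcal{C}$, in order to conclude that $\lambda^2(df_t)$ lies in the symmetric convex hull of $\{\lambda^2(df_0),\lambda^2(df_1)\}$, which by hypothesis is contained in $\mathcal{C}\subset\overline{\mathcal{N}}$. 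A secondary technical difficulty is the non-smoothness of individual singular values at crossings, which is handled cleanly by the min--max representation above.
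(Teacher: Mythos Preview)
Your overall strategy matches the paper's: construct the geodesic homotopy, prove convexity of the ordered partial sums $\sum_{i=1}^k \lambda_i^2(df_t)$ via the Jacobi identity and $K_N\le 0$, use majorization together with the symmetric convexity of $\mc C$ to trap $\lambda^2(df_t)$ in $\ol{\mc N}$, and combine $A'(0)=A'(1)=0$ with $A''\ge 0$ to get $A''\equiv 0$. The Ky~Fan min--max argument you give is exactly how the paper circumvents non-smoothness of individual singular values, and the majorization step you flag as the ``main obstacle'' is precisely the paper's Lemma~\ref{le.1}/Proposition~\ref{prop.1}.

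The genuine gap is in your final step. You write that ``strict convexity of the slope in the interior of $\ol{\mc N}$, together with the equality case of the second-variation inequality, then forces $\dot\gamma_x\equiv 0$''. But the trapping only gives $\lambda^2(df_t)\in\ol{\mc N}$, not in its interior, and on $\p\mc N$ the vanishing of the terms $(i),(ii),(iii)$ in \eqref{eq.32}--\eqref{eq.34} does \emph{not} force all $p_{i\a}=\lan\nabla_{df_t(a_i)}V,b_\a\ran$ to vanish: for instance, if $\lambda_i\lambda_j=1$ then $(ii)$ can be zero with $p_{ij},p_{ji}$ nonzero. The paper closes this gap with two ideas you are missing. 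First, it extracts extra rigidity from the majorization itself: if $\lambda^2(t_0)\in\p\mc N$ with $\lambda_1^2(t_0)>1$, Lemma~\ref{le.1} forces the full-sum equality $\sum_i\lambda_i^2(t_0)=\sum_i\mu_i(t_0)$, so the convex function $t\mapsto\sum_i\lan df_t(a_i),df_t(a_i)\ran$ is affine on $[t_1,t_2]$, and the Jacobi computation \eqref{eq.17} directly gives $\nabla_{df_t(a_i)}V=0$ for all $i$ --- no appeal to the slope function is needed. Second, for boundary points with $\lambda_1(t)=1$ (several singular values equal to $1$), the paper stratifies $(0,1)$ by $\Lambda_k=\{t:\lambda_k(t)<1\}$ and inducts on $k$: on $\Lambda_{k+1}\setminus\ol{\Lambda}_k$ one has $\lambda_1=\cdots=\lambda_k=1$, so the $k$-th partial sum equals its linear interpolant, giving $\nabla_{df_t(a_i)}V=0$ for $i\le k$; then $(i)=(ii)=(iii)=0$ together with $\lambda_i<1$ for $i>k$ handles the remaining indices. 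Only after this stratified argument does one conclude that $V$ is parallel along every $\G_{f_t}$, whence the boundary condition yields $V\equiv 0$.

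In short, the rigidity on $\p\mc N$ comes not from the slope function but from the equality case in the partial-sum majorization, and implementing it requires the stratified induction on the sets $\Lambda_k$.
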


\begin{corollary}\label{cor}
 Let $N$ be a complete
    Riemannian manifold with non-positive sectional curvature.
    Suppose $f_0,f_1:\Omega (\subset M) \rightarrow N$ are minimal maps with the same boundary data,
    which are homotopic to each other, then $f_0=f_1$ if either of the following occurs:
    \begin{itemize}
    \item the singular values of $f_0$ and $f_1$ all satisfy $\la_i^2+\la_j^2\leq 2$ ($\forall i\neq j$) and $\sum\limits_{i=1}^m \la_i^2\leq 3-\f{1}{m-1}$;
    \item $m\geq 3$, and the singular values of $f_0$ and $f_1$ all satisfy $\la_i^2+\la_j^2\leq 2$ ($\forall i\neq j$) and $\prod\limits_{i=1}^m (1+\la_i^2)^{\f{1}{2}} \leq \sqrt{3}\left(2-\f{1}{m-1}\right)^{\f{1}{2}}$;
    \item the slopes of $f_1$ and $f_2$ are no more than $\sqrt{3}$.

    \end{itemize}

\end{corollary}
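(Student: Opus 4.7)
My plan is to form the geodesic homotopy $\{f_t\}_{t\in[0,1]}$ joining $f_0$ and $f_1$ and establish convexity of the volume functional $V(t):=\int_\Omega \Delta_{f_t}\,dV_M$ on $[0,1]$. Since $f_0\simeq f_1$ and $N$ is complete with non-positive sectional curvature, lifting $f_0,f_1$ to the universal cover $\tilde N$ (which is Cartan--Hadamard) in the given homotopy class and joining the lifted values by the unique geodesics yields $\gamma_x(t):=f_t(x)$ a geodesic in $N$; the common boundary data $f_t|_{\partial\Omega}=\phi$ is preserved for all $t$. Once $V$ is shown to be convex on $[0,1]$, the first variation formula applied to the minimal maps $f_0,f_1$ (whose variation field $V_t:=\partial_t f_t$ vanishes on $\partial\Omega$) gives $V'(0)=V'(1)=0$, so a convex $V$ with these vanishings is forced to be constant, and an equality-case analysis of the convexity will yield $V_t\equiv 0$, i.e.\ $f_0=f_1$.

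The pointwise convexities driving the argument come from Jacobi field analysis. For any $v\in T_xM$ the field $df_t(v)$ along $\gamma_x$ is a Jacobi field, so by non-positive curvature $t\mapsto g_N(df_t(v),df_t(v))$ is convex; equivalently, the pulled-back metric $A(t)(x):=f_t^* g_N|_x$ is a convex matrix-valued function of $t$ in the quadratic-form sense. The Ky Fan maximum principle
\[
\sum_{i=1}^k \lambda_{(i)}^2(t,x) \;=\; \sup_{P}\tr\!\bigl(A(t)(x)\,P\bigr),
\]
with $P$ ranging over rank-$k$ orthogonal projections on $T_xM$, then expresses each partial sum of the ordered squared singular values as a supremum of convex functions in $t$, hence itself convex. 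Since $\mathcal{C}$ is symmetric and convex, the sorted convex combination $y(t):=(1-t)\lambda^2(df_0)^\downarrow+t\,\lambda^2(df_1)^\downarrow$ lies in $\mathcal{C}$, and these partial-sum convexities yield the weak majorization $\lambda^2(df_t)(x)\prec_w y(t)$.

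I would then combine the weak majorization with the symmetric convexity of $\mathcal{C}$ and the defining inequalities \eqref{eq.29}--\eqref{eq.30} of $\overline{\mathcal{N}}$ to confine $\lambda^2(df_t)(x)$ inside $\mathcal{C}\subset\overline{\mathcal{N}}$ for every $t$ and $x$. With the interpolated singular value vectors constrained to the admissible region, the remaining task is to promote the operator-convexity of $A(t)$, together with the convexity of the function $(y_1,\ldots,y_m)\mapsto\prod_i(1+y_i)^{1/2}$ on $\overline{\mathcal{N}}$ (the $y=\lambda^2$ reformulation of the convexity noted in the introduction), to pointwise convexity of the slope $\Delta_{f_t}(x)$ in $t$. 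Integrating over $\Omega$ yields the sought convexity of $V(t)$, and the proof closes as outlined in the first paragraph.

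The main technical obstacle is this last convexity promotion: the function $\prod_i(1+y_i)^{1/2}$ is actually Schur-\emph{concave} in the squared variables $y_i$, so a direct majorization argument points the wrong way. One must instead identify a refined collection of symmetric functions of the squared singular values whose convexity along the geodesic homotopy --- in conjunction with the defining inequalities \eqref{eq.29}--\eqref{eq.30} of $\overline{\mathcal{N}}$ --- forces $\ddot{\Delta}_{f_t}(x)\ge 0$. These are precisely the ``several convex functions'' of squared singular values advertised in the abstract, and locating them and verifying the combinatorial positivity of the resulting sum of second derivatives will be the heart of the calculation.
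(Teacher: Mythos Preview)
Your setup through the weak majorization $\lambda^2(df_t)\prec_w \mu(t)$ is correct and matches the paper (the paper does the partial-sum convexity by direct computation rather than Ky Fan, but the content is the same). Two things go wrong after that. First, the confinement you can actually prove is $\lambda^2(df_t)\in\overline{\mathcal{N}}$, not $\lambda^2(df_t)\in\mathcal{C}$; weak majorization does not preserve an arbitrary symmetric convex set, and the paper only establishes the former (Lemma~\ref{le.1}). Second, and this is the real gap, the paper does \emph{not} attempt pointwise convexity of $\Delta_{f_t}(x)$, and your proposed route via ``refined symmetric functions of squared singular values'' will not find the missing ingredient, because the integrand of $A''(t)$ is not a function of the eigenvalues of $f_t^*g_N$ alone. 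The paper computes $A''(t)$ directly from the second variation formula (\S\ref{S2}) and decomposes it as $(i)+(ii)+(iii)+(iv)+(v)$, where the key terms are quadratic forms in the quantities $p_{i\alpha}=\langle\nabla_{df_t(a_i)}V,b_\alpha\rangle$; the nonnegativity of $(i)$ on $\overline{\mathcal{M}}$ is exactly the convexity result of \cite{Lee.2014} you allude to, but it is a statement about this Hessian-type form in the $p_{i\alpha}$, not about the slope function of the $\lambda_i^2$. So the ``several convex functions'' of the abstract are the partial sums $\sum_{i\le k}\lambda_i^2(t)$ used for confinement, not a substitute for the second variation computation.

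Two further pieces are missing. The equality analysis needed to pass from $A''\equiv 0$ to $\nabla_{df_t(a_i)}V=0$ is not automatic on $\partial\mathcal{N}$: when some $\lambda_i(t)=1$ the quadratic form $(i)$ degenerates, and the paper handles this by a layered induction over the sets $\Lambda_k=\{t:\lambda_k(t)<1\}$, invoking the equality clause of Proposition~\ref{prop.1} (which in turn uses the second statement of Lemma~\ref{le.1}). Finally, for the corollary itself you have not verified that the three bulleted conditions cut out symmetric convex subsets of $\overline{\mathcal{N}}$. This is the actual content of \S\ref{S5}: for the first bullet one must show $\mathcal{C}_m\subset\overline{\mathcal{N}}$, which reduces to maximizing $G(a_2,\dots,a_m)=\sum_{i\ge 2}\frac{1}{1-a_i}$ over a polytope and is not immediate; the second and third bullets are then reduced to the first by elementary inequalities.
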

In particular, the first conclusion of Corollary \ref{cor} is an improvement of \cite[Theorem 5.2]{Lee.2019}, which claims $f_0=f_1$
whenever $\sum\limits_{i=1}^m \la_i^2< 2$.

This paper will be organized as follows. In Section \ref{S2}, we prove the existence of geodesic homotopy $\{f_t:t\in [0,1]\}$
of $f_0$ and $f_1$ with the aid of the classical Cartan-Hadamard theorem
and calculate the second derivative of the volume function of $\G(f_t)$ in terms of the singular values.
Afterwards in Section \ref{S3}, by applying majorization techniques in convex optimisation as in \cite{Lee.2019,Li.2023},
we establish the following confined property of the squared singular value vector function along the geodesic homotopy:
{\it For any interval $[t_1,t_2]\subset [0,1]$, $\la^2(t_1),\la^2(t_2)\in \ol{\mc{N}}$ implies
$\la^2(t)\in \ol{\mc{N}}$}. In the process, the convexity of the functions $\sum\limits_{i=1}^k \la_i^2(t)$
with $1\leq k\leq m$ plays
a crucial role. 
Section \ref{S4} will be dedicated to the proof of Theorem \ref{main-thm} based on preliminary works in the last two sections. 
Finally in Section \ref{S5}, the construction of symmetric convex subsets of $\ol{\mc{N}}$ enables us to give
applications of Theorem \ref{main-thm}.



\centerline{}
\centerline{}
\section{The second variation formula for the volume functional}\label{S2}

Let $\Om$ be a bounded domain of an $m$-dimensional Riemannian manifold
$(M,g_M)$, $(N,g_N)$ be an $n$-dimensional complete Riemanniam manifold with non-positive sectional curvature, i.e. $K_N\leq 0$,
and $f_0,f_1$ be both smooth maps from $\Om$ into $N$.
Assume $f_0,f_1$ are homotopic to each other, and $f_0|_{\p\Om}=f_1|_{\p\Om}$. Let $\td{N}$ be the universal covering manifold
of $N$ equipped with the pull-back metric $g_{\td{N}}$, $\td{f}_0,\td{f}_1:\Om\ra \td{N}$ be lifts of $f_0,f_1$, respectively, so that
$\td{f}_0(x_0)=\td{f}_1(x_0)$ for a fixed $x_0\in \Om$,
 then $K_{\td{N}}\leq 0$ and the classical Cartan-Hadamard theorem implies
the existence and uniqueness of the geodesic $\td{\g}_x(t):[0,1]\ra \td{N}$ for each $x\in \Om$, so that
$\td{\g}_x(0)=\td{f}_0(x)$, $\td{\g}_x(1)=\td{f}_1(x)$.
Define
$$\td{f}_t(x):=\td{\g}_x(t),\quad f_t:=\pi\circ \td{f}_t$$
with $\pi$ the universal covering mapping from $\td{N}$ onto $N$.
Such $\{f_t:M\ra N|t\in [0,1]\}$ is called a {\it geodesic homotopy}, which satisfies
\begin{itemize}
\item $f_t$ smoothly dependents on $t$;
\item for each $y\in \p \Om$, $t\in [0,1]\mapsto f_t(y)$ is a constant function;
\item for each $x\in \Om$, $\g_x(t):=f_t(x)$ is a geodesic in $N$ connecting $f_0(x)$ and $f_1(x)$.
\end{itemize}


Given $0 \le t \le 1$, then $f_t$ induces an embedding 
$$x\in \Om \mapsto (x,f_t(x))\in M\times N,$$
whose image, denoted by $\Gamma _{f_t}$, is the graph of $f_t$.
For each $x \in \Omega$, by the theory of singular value decomposition, there exist orthonormal bases $\left\{a_i\right\}_{i=1}^{m}$,
$\left\{b_j\right\}_{j=1}^{n}$ in $T_xM$ and $T_{f_t(x)}N$ respectively, such that
\begin{equation}\label{eq7}
    df_t\left(a_i\right)=\left\{
        \begin{matrix}
            &\lambda_i(t)b_i, \hfill & \hspace{3em}   &i=1,\cdots, r,\hfill\\
            &0, \hfill & \hspace{3em}   &i=r+1,\cdots, m.\hfill
        \end{matrix}\right.
\end{equation}
where
\begin{equation}
\lambda_1\left(t\right)\ge\cdots\ge\lambda_r\left(t\right)>\lambda_{r+1}\left(t\right)=\cdots=\lambda_m\left(t\right)=0
\end{equation}
are singular values of $(df_t)_x:(T_x,g_M)\ra (T_{f_t(x)},g_N)$, and $r$ is the rank of this tangent map.
As in \cite{Lee.2019},
\begin{equation}
\lambda\left(t\right):=(\lambda_1(t),\cdots,\lambda_m(t))
\end{equation}
is called $the$ $singular$ $value$ $vector$ of $df_t$ at $x$.
Let
$$g(t):=g_M+f_t^*g_N$$
be the induced metric on $\G_{f_t}$, whose corresponding volume form is
\begin{equation}\label{eq6}
    dv_t=\sqrt{\det(g_{ij}(t))}\,dv_M
\end{equation}
with $dv_M$ the volume form of $M$ and
\begin{equation}
    g_{ij}\left( t\right):=g_M\left( a_i,a_j\right)+g_N\left( df_t\left( a_i\right),df_t\left( a_j\right) \right).
\end{equation}
Denote by $V:=\frac{df_t}{dt}$ the variation field on $\Om$, then a straightforward calculation shows
\begin{eqnarray}
    \frac{d}{dt}g_{ij}\left( t\right) & = & \left\langle \nabla _{df_t(a_i)}V,df_t(a_j)\right\rangle +
    \left\langle df_t(a_i),\nabla _{df_t(a_j)}V\right\rangle,\label{eq11} \\
    \frac{d^2}{dt^2}g_{ij}(t)         & = & 2\left\langle \nabla _{df_t(a_i)}V ,\nabla _{df_t(a_j)}V\right\rangle\nonumber\\
    && + \left\langle R(V,df_t(a_i))V,df_t(a_j)\right\rangle + \left\langle df_t(a_i),R(V,df_t(a_j))V\right\rangle\label{eq12}\\
    && + \left\langle \nabla _{df_t(a_i)} \nabla _V V, df_t(a_j)\right\rangle + \left\langle df_t(a_j), \nabla _{df_t(a_j)} \nabla _V V
    \right\rangle. \nonumber
\end{eqnarray}
Here $\lan\cdot,\cdot\ran:=g_N(\cdot,\cdot)$,
 $\nabla$ is the Levi-Civita connection associated to $g_N$ and
\begin{equation}
    R(X,Y) := \nabla _{X} \nabla _{Y} - \nabla _{Y} \nabla _{X} - \nabla _{\left[ X,Y \right]}
\end{equation}
is the corresponding curvature operator.
Denote by
\begin{equation}
    A(t):=\int_\Omega \sqrt{\det(g_{ij}(t))}\,dv_M.
\end{equation}
the volume of $\G_{f_t}$, then
\begin{equation}
    \frac{d}{dt}A\left( t\right)     =  \frac{1}{2} \int_{\Omega} \sum_{i,j=1}^{m} \left(g^{ij}\frac{dg^{ij}}{dt}\right)dv_t,\label{eq8}
\end{equation}
\begin{equation}\label{eq9}\aligned
    \frac{d^2}{dt^2}A\left( t\right)  = & -\frac{1}{2} \int_{\Omega} \sum_{i,j,k,l=1}^{m} g^{ij}\frac{dg_{jk}}{dt} g^{kl}\frac{dg_{li}}{dt} dv_t
    +\frac{1}{4} \int_{\Omega}\left(\sum_{i,j=1}^{m} g^{ij}\frac{dg_{ij}}{dt}\right) ^2dv_t \\&+\frac{1}{2}\int_{\Omega} \sum_{i,j=1}^{m} g^{ij} \frac{d^2g_{ij}}{dt^2} dv_t\endaligned
\end{equation}
with $\left( g^{ij}\right)$ the inverse of $\left( g_{ij}\right)$.
Denoting
\begin{equation}
p_{i\alpha}:=\left\langle \nabla_{df_t\left( a_i\right)} V,b_{\alpha}\right\rangle
\end{equation}
and replacing $\frac{dg_{ij}}{dt}$ and $\frac{d^2g_{ij}}{dt^2}$ in \eqref{eq9} with formulas in \eqref{eq11}, \eqref{eq12}, we have
\begin{equation}\label{eq.18}
    \frac{d^2}{dt^2}A(t)=(i)+(ii)+(iii)+(iv)+(v),
\end{equation}
where
\begin{eqnarray}
    (i) & = & \int_{\Omega } \left( \sum_{1\leq i \le r}\frac{p_{ii}^2}{(1+\lambda_i^2)^2}+\sum_{1 \le i,j \le r,i \ne j}
    \frac{\lambda_i \lambda_j p_{ii}p_{jj}}{(1+\lambda_i^2)(1+\lambda_j^2)} \right)dv_t, \label{eq.32} \\
    (ii) & = & \int _{\Omega} \sum _{1 \le i < j \le r} \frac{p_{ij}^2+p_{ji}^2-2\lambda _i \lambda _j p_{ij}p_{ji}}{(1+\lambda _i^2)(1+\lambda _j^2)}dv_t,
    \label{eq.33} \\
    (iii) & = & \int _{\Omega} \sum _{1 \le i \le r, r+1 \le \alpha \le n} \frac{p_{i\alpha}^2}{1+\lambda _i^2} dv_t, \label{eq.34} \\
    (iv) & = & \int _{\Omega} \sum _{1 \le i \le r} \frac{\lambda _i^2}{1+\lambda _i^2} \left\langle \nabla _{b_i} \nabla _{V} V, b_i\right\rangle dv_t,
    \label{eq.35} \\
    (v) & = & \int _{\Omega} \sum _{1 \le i \le r} \frac{\lambda _i^2}{1+\lambda _i^2} \left\langle R(b_i,V)b_i,V\right\rangle dv_t. \label{eq.36}
\end{eqnarray}
As shown in \cite{Lee.2014}, $(i)\geq 0$ whenever $\lambda \in \overline{\mathcal{M}}$,
$(ii) \ge 0$ whenever $0 \le \lambda _i \lambda _j \le 1$ for each $i\neq j$, $(iii)$ is non-negative, $(iv)=0$ since $\{f_t\}$ is a geodesic homotopy,
and $(v) \ge 0$ since $K_N \le 0$.

\centerline{}
\centerline{}
\section{Confined properties of squared singular value vectors}\label{S3}

\begin{definition}\label{max_F}
    Let $x:=(x_1,\cdots ,x_m),\,y:=(y_1,\cdots ,y_m)\in \mathbb{R}^m$, $y$ is called $l$-$weakly$ $majorized$ by $x$, whenever
    $$\sum _{i=1}^k \tilde{y}_i \le \sum _{j=1}^k \tilde{x} _j$$
    for $k=1,2,\cdots ,l$, where $\left\{ \tilde{x} _i\right\}$ (or $\left\{ \tilde{y} _i\right\}$) is a rearrangement of $\left\{ x_i\right\}$
    (or $\left\{ y_i\right\}$) in descending order, denoted by $y \prec _l x$.
\end{definition}
For an arbitrary point $x \in \mathbb{R} _{\ge 0}^m$, let $W(x) := \left\{ y \in \mathbb{R} _{\ge 0}^m : y \prec _m x\right\}$, $E(x)$ be the set
consisting of all these points $(\delta _1 x_{\sigma (1)}, \cdots, \delta _m x_{\sigma (m)})$, where $\sigma$ is an arbitrary permutation of $\{1, \cdots, m\}$ and
$\delta _i = 0$ or $1$, and $H(x)$ be the convex hull of $E(x)$, then $W(x)=H(x)$ (see \cite[Theorem 6]{Mirsky.1959}). The following two
lemmas on $W(x)$ shall play a crucial part on the present paper.

\begin{lemma}
Let $D$ be a domain of $\R^m$, $F$ be a strictly convex, symmetric function on $D$, such that for each $u,v\in D$, $u_i\leq v_i$ for each $i=1,\cdots,m$ implies
$F(u)\leq F(v)$, and the equality holds if and only if $u=v$. Then for each $x\in D$ satisfying $W(x)\subset D$, we have
$F(y)\leq F(x)$ for each $y\in W(x)$, where the equality holds if and only if $y$ is a rearrangement of $x$.
\end{lemma}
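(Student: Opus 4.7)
My plan is to exploit the identity $W(x)=H(x)$ from Mirsky's theorem, already quoted in the excerpt, and reduce the problem to understanding $F$ on the finite set of extreme points $E(x)$. Since $H(x)$ is the convex hull of $E(x)$, every $y\in W(x)$ admits a representation $y=\sum_{k} t_k e^{(k)}$ with $t_k\geq 0$, $\sum_k t_k = 1$ and $e^{(k)}\in E(x)$. The hypothesis $W(x)\subset D$ guarantees that all the $e^{(k)}$, and in fact every partial convex combination, remain in $D$, so the convexity and monotonicity assumptions on $F$ can actually be applied.

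The first step is a bound on a single extreme point. Any $e\in E(x)$ has the form $e=(\delta_{1}x_{\sigma(1)},\ldots,\delta_{m}x_{\sigma(m)})$ for some permutation $\sigma$ and some $\delta_{i}\in\{0,1\}$. Componentwise $e_i\leq x_{\sigma(i)}$, so monotonicity gives
\[
F(e)\leq F\bigl(x_{\sigma(1)},\ldots,x_{\sigma(m)}\bigr)=F(x),
\]
where the last equality uses the symmetry of $F$. The monotonicity equality clause tells me that $F(e)=F(x)$ forces $\delta_{i}=1$ for all $i$, i.e.\ $e$ is a rearrangement of $x$.

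The second step combines this with Jensen's inequality. By strict convexity of $F$ on $D$ and the fact that the convex combination lies in $W(x)\subset D$,
\[
F(y)=F\Bigl(\sum_{k}t_{k}e^{(k)}\Bigr)\leq \sum_{k}t_{k}F(e^{(k)})\leq \sum_{k}t_{k}F(x)=F(x),
\]
which is the main inequality. For the equality case, suppose $F(y)=F(x)$. Then both inequalities above are equalities. Strict convexity upgraded to the Jensen form (an elementary induction on the number of summands) forces all $e^{(k)}$ with $t_{k}>0$ to coincide, so $y$ equals a single element $e\in E(x)$; and then $F(e)=F(x)$ together with the equality clause of Step~1 forces $e$ (hence $y$) to be a rearrangement of $x$. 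The converse assertion that any rearrangement of $x$ achieves $F(y)=F(x)$ is immediate from symmetry.

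The argument is essentially a standard extreme-point-plus-Jensen reduction, so no single step is truly hard. The only mild subtlety is that equality must be tracked through two different mechanisms, monotonicity in Step~1 and strict convexity in Step~2; conflating them would give a weaker conclusion. The hypothesis $W(x)\subset D$ is used crucially to ensure $F$ is defined and convex along the whole segment joining the $e^{(k)}$, which is why it appears explicitly in the statement.
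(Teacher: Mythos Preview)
Your proof is correct and follows essentially the same approach as the paper: both use $W(x)=H(x)$, write $y$ as a convex combination of points in $E(x)$, and combine monotonicity with Jensen's inequality for the strictly convex symmetric $F$. The only cosmetic difference is the order of the two steps---the paper first applies monotonicity (comparing $y$ with the auxiliary point $z:=\sum_\alpha \lambda_\alpha w_\alpha$ built from the full rearrangements $w_\alpha$) and then Jensen, whereas you apply Jensen first on the $e^{(k)}$ and then monotonicity on each extreme point; the equality analysis is likewise the same in substance.
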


\begin{proof}
Denote
   \begin{equation}\label{v}
   E(x)=\left\{ v_1, \cdots, v_p\right\}
   \end{equation}
   with
   \begin{equation}
   v_\a = (\delta _{1}^{\a} x_{\sigma _\a(1)}, \cdots, \delta _{m}^{\a} x_{\sigma _\a(m)}),
   \end{equation}
   then each $y\in W(x)=H(x)$ can be written as
   \begin{equation}
   y=\la_1 v_1+\cdots+\la_p v_p,
   \end{equation}
where $\la_1,\cdots,\la_p$ are all non-negative numbers, satisfying
$\sum\limits_{\a=1}^p \la_\a=1.$  Let
\begin{equation}
w_\a:= ( x_{\sigma _\a(1)}, \cdots, x_{\sigma _\a(m)})
\end{equation}
and
\begin{equation}\label{z}
z:=\la_1 w_1+\cdots+\la_p w_p\in H(x).
\end{equation}
Combining (\ref{v})-(\ref{z}) we have $y_i\leq z_i$ for all $i=1,\cdots,m$.
Since $F$ is strictly convex and symmetric, we have
$$F(y)\leq F(z)\leq F(x),$$
and the equality holds if and only if $y=z=w_\a$ for some $\a$,
i.e. $y$ is a rearrangement of $x$.
\end{proof}

\begin{lemma}\label{le.1}
Let $\mc C$ be a symmetric convex subset of $\ol{\mc{N}}$ defined in \eqref{eq.29}-\eqref{eq.30} and $x\in \mc C$, then $W(x)\subset \overline{\mathcal{N}}$.
Moreover, if $y\in W(x)\cap \p\mc{N}$ and $\max\{y_i\}>1$, then $\sum\limits_{i=1}^m y_i=\sum\limits_{i=1}^m x_i$.
Here $\p{\mc{N}}$ consists of all such vectors $a:=(a_1,\cdots,a_m)$ in $\ol{\mc{N}}$ satisfying
\begin{equation}
\max_{1\leq i<j\leq m}a_ia_j=1
\end{equation} 
or
\begin{equation}
\prod\limits _{i=1}^{m} (1-a_i) + \sum\limits _{i=1}^{m} (1-a_1) \cdots a_i \cdots (1-a_m)=0.
\end{equation}

\end{lemma}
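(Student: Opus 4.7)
The plan is to reduce the lemma to two ingredients: \textbf{(I)} the set $\overline{\mathcal N}$ is closed under the coordinate-wise partial order (decreasing any component of $a\in\overline{\mathcal N}$ preserves membership); and \textbf{(II)} every $y\in W(x)$ admits a coordinate-wise majorant $z\in\mc C$. Combining (I) and (II) immediately yields $W(x)\subset\overline{\mathcal N}$, while the equality of sums requires a more delicate path argument.

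For (II), imitating the construction in the preceding lemma I decompose $y=\sum_\alpha\lambda_\alpha v_\alpha$ with $v_\alpha\in E(x)$, replace each $v_\alpha$ by the pure permutation $w_\alpha=(x_{\sigma_\alpha(1)},\dots,x_{\sigma_\alpha(m)})$, and set $z:=\sum_\alpha\lambda_\alpha w_\alpha$; the symmetry and convexity of $\mc C$ give $z\in\mc C\subset\overline{\mathcal N}$, while $y_i\leq z_i$ is immediate. For (I), condition \eqref{eq.29} is trivially preserved, so the crux is \eqref{eq.30}. The central observation is that the polynomial $P_m(a):=\prod_i(1-a_i)+\sum_i a_i\prod_{j\neq i}(1-a_j)$ is multilinear and satisfies the recursion $P_m(a)=a_k\prod_{i\neq k}(1-a_i)+(1-a_k)P_{m-1}(a_{\hat{k}})$ for every $k$, where $a_{\hat{k}}$ denotes the vector with the $k$-th entry removed. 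I would first prove the auxiliary fact that $a\in\overline{\mathcal N}_m$ implies $a_{\hat{k}}\in\overline{\mathcal N}_{m-1}$: if all entries of $a_{\hat{k}}$ are $\leq 1$, the non-negativity of $P_{m-1}(a_{\hat{k}})$ is immediate; if the unique entry of $a$ exceeding one lies in $a_{\hat{k}}$, condition \eqref{eq.29} forces $a_k<1$ and $\prod_{i\neq k}(1-a_i)<0$, so the recursion combined with $P_m(a)\geq 0$ gives $(1-a_k)P_{m-1}(a_{\hat{k}})\geq 0$. Granted this, the affine dependence of $P_m$ on $a_k$, with both endpoint values $P_m(a)\geq 0$ and $P_m(0,a_{\hat{k}})=P_{m-1}(a_{\hat{k}})\geq 0$, yields downward closure in a single coordinate, and iteration gives the general claim.

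For the equality assertion, let $y_1:=\max_i y_i>1$, so by \eqref{eq.29} we have $y_j<1$ for all $j\geq 2$. I first rule out the boundary type $y_1 y_j=1$ for some $j\geq 2$: together with $y\leq z$ and $z_1 z_j\leq 1$ this forces $z_1=y_1$, $z_j=y_j$ and $z_1 z_j=1$; averaging $z$ with its transposition of coordinates $1$ and $j$ (which still lies in $\mc C$ by symmetric convexity) gives $((z_1+z_j)/2)^2\leq 1$, and AM-GM contradicts $z_1>1$. Hence the only boundary type remaining is $P_m(y)=0$. Along the segment $a(t):=y+t(z-y)$, $t\in[0,1]$, (I) applied to $z$ gives $a(t)\in\overline{\mathcal N}$, so $P_m\circ a\geq 0$ attains its minimum at $t=0$; since $\partial_k P_m(a)=-Q_{m-1}(a_{\hat{k}})\leq 0$ on $\overline{\mathcal N}$ (which follows from the auxiliary fact together with a short case analysis), the non-negative directional derivative $\sum_k(z_k-y_k)\partial_k P_m(y)\geq 0$ forces every term to vanish.

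To convert these vanishing conditions into $y=z$, I expand $P_m(y)=0$ via the recursion at $k=1$ to obtain the identity $\prod_{i\geq 2}(1-y_i)=(y_1-1)Q_{m-1}(y_{\hat{1}})$; since the left side is positive, $Q_{m-1}(y_{\hat{1}})>0$ and hence $z_1=y_1$. For each $k\geq 2$ with $z_k>y_k$, the requirement $Q_{m-1}(y_{\hat{k}})=0$, combined with the decomposition of $Q_{m-1}(y_{\hat{1}})$ around the coordinate $y_k$ and the previously derived identity, reduces after direct algebra to $y_1(1-y_k)=1-y_1 y_k$, which simplifies to $y_1=1$ and contradicts $y_1>1$. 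Therefore $z=y$, and summing gives $\sum_i y_i=\sum_i z_i=\sum_i x_i$ since each $w_\alpha$ is a permutation of $x$. The main technical obstacle, I expect, is the careful bookkeeping of signs in the multilinear recursions for $P_m$ and $Q_{m-1}$, particularly in the mixed regime where exactly one coordinate exceeds one.
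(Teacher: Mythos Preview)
Your argument is correct, but it follows a genuinely different route from the paper's.

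The paper sorts $x$ and $y$, disposes of the case $y_1\le 1$ trivially, and for $y_1>1$ constructs the \emph{specific} point
\[
z=(y_1,\ x_1+x_2-y_1,\ x_3,\dots,x_m),
\]
which is a convex combination of $x$ and its $(1,2)$-transposition and hence lies in $\mc C$. The key observation is that $z_1=y_1$ and $(y_2,\dots,y_m)\prec_{m-1}(z_2,\dots,z_m)$, so the preceding lemma applied to the strictly convex symmetric function $G(b)=\sum_{i}1/(1-b_i)$ on $[0,1)^{m-1}$ yields both $y\in\overline{\mc N}$ (via the reformulation $\frac{1}{1-y_1}+G(y_2,\dots,y_m)\le m-1$) and, in the equality case, that $(y_2,\dots,y_m)$ is a rearrangement of $(z_2,\dots,z_m)$, whence $\sum y_i=\sum z_i=\sum x_i$.

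Your approach instead establishes the structural fact that $\overline{\mc N}$ is downward closed under the coordinatewise order, via the affine recursion for $P_m$, and then uses the generic majorant $z=\sum_\alpha\lambda_\alpha w_\alpha\in\mc C$ from the proof of the preceding lemma. For the equality case you run a first-order argument: $P_m(a(t))\ge 0$ with equality at $t=0$ forces each summand $(z_k-y_k)\,\partial_kP_m(y)$ to vanish, and the polynomial identities arising from $P_m(y)=0$ and $\partial_kP_m(y)=0$ collapse to $y_1=1$, a contradiction. The paper's route is shorter and makes the preceding lemma do the heavy lifting through the rational-function reformulation via $G$; your route isolates the downward closure of $\overline{\mc N}$ as an independent fact and replaces the appeal to strict convexity by a direct variational and algebraic analysis of $P_m$, which is more self-contained but requires more explicit bookkeeping of the multilinear recursions.
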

\begin{proof}
 Let $y$ be an arbitrary point in $W(x)$. Since $W(x)$ is preserved under the action of
 permutations, without loss of generality we can assume
 \begin{equation}
 x_1\geq\cdots \geq x_m, \quad y_1\geq \cdots\geq y_m.
 \end{equation}
If $y_1\leq 1$, then $y$ automatically satisfies \eqref{eq.29}-\eqref{eq.30}
and hence $y\in \ol{\mc{N}}$. Now we assume $y_1>1$.
In this case, noting that
\begin{equation}\aligned
&\prod_{i=1}^m(1-y_i)+\sum_{i=1}^m(1-y_1)\cdots y_i\cdots (1-y_m)\\
=&\prod_{i=1}^m(1-y_i)\left(\sum_{i=1}^m \f{1}{1-y_i}-m+1\right).
\endaligned
\end{equation}
we have
\begin{equation}\label{pN}\aligned
y\in \ol{\mc{N}}\Longleftrightarrow &y_1y_2\leq 1\text{ and }\f{1}{1-y_1}+G(y_2,\cdots,y_m)\leq m-1,\\
y\in \p{\mc{N}}\Longleftrightarrow &y_1y_2=1\text{ or }\f{1}{1-y_1}+G(y_2,\cdots,y_m)= m-1,
\endaligned
\end{equation}
with
\begin{equation}\label{def-G}
G:(y_2,\cdots,y_m)\in [0,1)^{m-1}\mapsto \sum_{i=2}^m \f{1}{1-y_i}.
\end{equation}
Due to the symmetric and convexity of $\mc{C}$,
\begin{equation}\label{hat-x}
\hat{x}:=(x_2,x_1,x_3,\cdots,x_m)
\end{equation}
and
\begin{equation}
\f{x+\hat{x}}{2}=\left(\f{x_1+x_2}{2},\f{x_1+x_2}{2},x_3,\cdots,x_m\right)
\end{equation}
both lie in $\mc{C}$, then
\begin{equation}\label{y1y2}
y_1y_2<\left(\f{y_1+y_2}{2}\right)^2\leq  \left(\f{x_1+x_2}{2}\right)^2\leq 1.
\end{equation}
Denote
\begin{equation}
\aligned
z=&(z_1,z_2,z_3,\cdots,z_m)\\
:=&(y_1,x_2+(x_1-y_1),x_3,\cdots,x_m),
\endaligned
\end{equation}
then $z$ is also a convex combination of $x$ and $\hat{x}$ and hence $z\in \mc{C}$.
By $y\prec_m x$, it is easy to verify that $y\prec_m z$ and
\begin{equation}
(y_2,\cdots,y_m)\prec_{m-1}(z_2,\cdots,z_m).
\end{equation}
Applying Lemma \ref{max_F} to the function $G$ gives
\begin{equation}\label{G}
\f{1}{1-y_1}+G(y_2,\cdots,y_m)\leq \f{1}{1-z_1}+G(z_2,\cdots,z_m)\leq m-1
\end{equation}
and then $y\in \ol{\mc{N}}$. Particularly if $y\in \p\mc{N}$,
then \eqref{pN} and \eqref{y1y2} implies $\f{1}{1-y_1}+G(y_2,\cdots,y_m)=m-1$.
Thus the equality of \eqref{G} holds, which forces $y=z$ and then
\begin{equation}
\sum\limits_{i=1}^m y_i=\sum\limits_{i=1}^m z_i=\sum_{i=1}^m x_i.
\end{equation}

\end{proof}

Now we consider the squared singular value vectors
\begin{equation}
\la^2(t):=(\lambda _1^2(t), \cdots, \lambda _m^2(t))
\end{equation}
along a given geodesic homotopy. Based on Lemma \ref{le.1}, we can get a confined property for
$\la^2(t)$ as follows.
\begin{proposition}\label{prop.1}
Let $\left[ t_1,t_2\right] \subset \left[ 0,1\right]$ and
$$\mu(t):=\frac{t_2-t}{t_2-t_1}\lambda ^2(t_1) + \frac{t-t_1}{t_2-t_1}\lambda ^2(t_2)$$
be the linear function on this interval satisfying $\mu(t_1)=\la^2(t_1)$ and $\mu(t_2)=\la^2(t_2)$,
then
$$\sum\limits _{i=1}^{l} \lambda _i^2(t)\leq \sum\limits _{i=1}^{l} \mu _i(t)\qquad \forall 1\leq l\leq m.$$
Especially if $\sum\limits _{i=1}^{l} \lambda _i^2(t_0) = \sum\limits _{i=1}^{l} \mu _i(t_0)$ for some $t_0 \in ( t_1, t_2)$,
    we have $\nabla _{df_t(a_i)}V=0$ for $i=1, \cdots , l$, $t \in \left[ t_1,t_2\right]$.

 Moreover,  if both $\lambda ^2(t_1)$ and $\lambda ^2(t_2)$ lie in a symmetric convex subset $\mc{C}$ in
    $\overline{\mathcal{N}}$, then $\lambda ^2(t) \in \overline{\mathcal{N}}$ for each $t\in [t_1,t_2]$.
  Especially if $\lambda ^2(t_0)\in \partial \mathcal{N}$ and $\lambda _1^2(t_0)>1$ for some $t_0 \in (t_1, t_2)$,
     we have $\nabla _{df_t(a_i)}V=0$ for $i=1, \cdots , m$, $t \in \left[ t_1,t_2\right]$.

\end{proposition}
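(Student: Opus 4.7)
The plan is to first establish the convexity in $t$ of each partial sum $\sum_{i=1}^{l} \lambda_i^2(t)$, and then combine this with the majorization machinery of Lemma \ref{le.1} to derive the remaining two assertions. The key observation is that for each $x \in \Omega$ and each fixed $e \in T_x M$, the vector field $J_e(t) := df_t(e)$ along the geodesic $\gamma_x(t) := f_t(x)$ is a Jacobi field, since $(x, t) \mapsto f_t(x)$ is a two-parameter variation whose $t$-curves are geodesics and $[V, e] = 0$. Using $K_N \leq 0$, one obtains
\begin{equation*}
\frac{d^2}{dt^2}|J_e(t)|^2 = 2|\nabla_V J_e|^2 + 2\langle R(V, J_e) V, J_e\rangle \geq 0,
\end{equation*}
so for any fixed orthonormal $l$-frame $\{e_1, \ldots, e_l\} \subset T_x M$, the function $\phi_{\{e_j\}}(t) := \sum_{j=1}^{l} |df_t(e_j)|^2$ is convex in $t$. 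Ky Fan's maximum principle then gives $\sum_{i=1}^{l} \lambda_i^2(t) = \max_{\{e_j\}} \phi_{\{e_j\}}(t)$, and as a pointwise supremum of convex functions this sum is convex in $t$, yielding the inequality $\sum_{i=1}^{l} \lambda_i^2(t) \leq \sum_{i=1}^{l} \mu_i(t)$.

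For the equality statement, suppose $\sum_{i=1}^{l} \lambda_i^2(t_0) = \sum_{i=1}^{l} \mu_i(t_0)$ at some $t_0 \in (t_1, t_2)$. A convex function that touches its own secant line at an interior point must coincide with it on the whole interval, so $\sum_{i=1}^{l} \lambda_i^2 \equiv \sum_{i=1}^{l} \mu_i$ on $[t_1, t_2]$. Picking an ONS $\{e_j^{(0)}\}$ realizing the maximum at $t_0$ and applying the same pinching argument to the convex function $\phi_{\{e_j^{(0)}\}} \leq \sum_{i=1}^{l} \lambda_i^2$, this $\phi_{\{e_j^{(0)}\}}$ is also affine on $[t_1, t_2]$; being a sum of individually convex terms $|J_{e_j^{(0)}}(t)|^2$, each summand must itself be affine. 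Hence $|\nabla_V J_{e_j^{(0)}}|^2 + \langle R(V, J_{e_j^{(0)}}) V, J_{e_j^{(0)}}\rangle \equiv 0$, both non-negative terms vanish, and $[V, e_j^{(0)}] = 0$ lets me rewrite $\nabla_V J_{e_j^{(0)}} = \nabla_{df_t(e_j^{(0)})} V = 0$. Since $\phi_{\{e_j^{(0)}\}} \equiv \sum_{i=1}^{l} \lambda_i^2$ on $[t_1, t_2]$, the span of $\{e_j^{(0)}\}$ is an optimizing top-$l$ subspace at every $t$, and I may choose the SVD so that $\mathrm{span}\{a_1(t), \ldots, a_l(t)\} = \mathrm{span}\{e_j^{(0)}\}$ at each $t$, delivering $\nabla_{df_t(a_i)} V = 0$ for $i = 1, \ldots, l$.

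For the second half, convexity of $\mathcal{C}$ gives $\mu(t) \in \mathcal{C} \subset \overline{\mathcal{N}}$ for all $t \in [t_1, t_2]$. Since the components of $\lambda^2(t_1)$ and $\lambda^2(t_2)$ are already in decreasing order (SVD convention), the same holds for $\mu(t)$, and the first claim translates to $\lambda^2(t) \prec_m \mu(t)$, i.e. $\lambda^2(t) \in W(\mu(t))$. Lemma \ref{le.1} then delivers $\lambda^2(t) \in \overline{\mathcal{N}}$. If moreover $\lambda^2(t_0) \in \partial\mathcal{N}$ and $\lambda_1^2(t_0) > 1$ for some interior $t_0$, the equality clause of Lemma \ref{le.1} forces $\sum_{i=1}^{m} \lambda_i^2(t_0) = \sum_{i=1}^{m} \mu_i(t_0)$, which by the equality case above (with $l = m$) gives $\nabla_{df_t(a_i)} V = 0$ for all $i = 1, \ldots, m$ and all $t \in [t_1, t_2]$.

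The main delicate point I anticipate is the equality analysis: propagating the affine pinching of $\phi_{\{e_j^{(0)}\}}$ from the single interior point $t_0$ to the entire interval via the convexity-plus-upper-bound argument, and identifying the optimizing subspace $\mathrm{span}\{e_j^{(0)}\}$ with the top-$l$ SVD subspace $\mathrm{span}\{a_1(t), \ldots, a_l(t)\}$ at every $t$, where the nonuniqueness of the SVD basis caused by possible multiplicities of singular values must be absorbed into an appropriate choice.
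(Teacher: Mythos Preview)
Your proof is correct and follows essentially the same route as the paper. Both hinge on the convexity of $t\mapsto \sum_{j=1}^l |df_t(e_j)|^2$ for a fixed orthonormal frame (the paper's $F_k$), combined with Ky Fan's principle $S_l=\max_{\{e_j\}}\phi_{\{e_j\}}$ (which the paper uses as ``properties of singular values''), and then Lemma~\ref{le.1} for the second half.

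The only notable difference is in the equality analysis, which you yourself flagged. The paper's treatment is more direct than yours: choosing the SVD frame $\{a_i\}$ at $t_0$ and reading off from the chain $S_l(t_0)=F_l(t_0)\leq \text{(secant of }F_l)\leq \text{(secant of }S_l)=\sum_i\mu_i(t_0)$ that $F_l$ itself is affine on $[t_1,t_2]$, whence $F_l''\equiv 0$ forces each $|\nabla_{df_t(a_i)}V|^2=0$. There is no need to first prove $S_l$ affine and then pinch $\phi_{\{e_j^{(0)}\}}$ against it, and no need to worry about identifying spans or SVD nonuniqueness: the frame $\{a_i\}$ in the conclusion is simply the fixed frame chosen at $t_0$. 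Your longer argument is valid but can be streamlined.
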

\begin{proof}
    For any fixed $t_0\in \left[ t_1,t_2\right]$,
    let
    $\left\{ a_i\right\}$ be an orthonormal basis of $T_xM$, such that
    \begin{equation}
        \lan df_{t_0}(a_i),df_{t_0}(a_i)\ran=\left\{
        \begin{matrix}
            &\lambda _i^2(t_0) \hfill & \, & 1\le i \le r:=\text{rank} \, df_{t_0}, \hfill \\
            &0 \hfill & \, & r+1\le i\le m. \hfill
        \end{matrix}
        \right.
    \end{equation}
    Now we consider two functions
    \begin{eqnarray}
        F_k(t) & := & \sum_{i=1}^{k} \left\langle df_t(a_i),df_t(a_i)\right\rangle, \\
        S_k(t) & := & \sum_{i=1}^{k} \lambda _i^2(t)
    \end{eqnarray}
    on $\left[ t_1,t_2\right]$. Due to the properties of singular values,
    it is easily verified that $S_k(t_0)=F_k(t_0)$ and $F_k(t)\le S_k(t)$. 
    On the other hand,
    \begin{equation}\label{eq.17}
    \aligned
        \frac{d^2}{dt^2}F_k(t)  = & \frac{d^2}{dt^2}\sum_{i=1}^{k} \left\langle df_t(a_i),df_t(a_i)\right\rangle=  \frac{d}{dt}\sum_{i=1}^{k} 2\left\langle \nabla _{df_t(a_i)}V,df_t(a_i)\right\rangle \\
                                = & 2\sum_{i=1}^{k} \left\langle \nabla _{df_t(a_i)} V,\nabla _{df_t(a_i)} V \right\rangle +
                               2\sum_{i=1}^{k} \left\langle \nabla _V\nabla _{df_t(a_i)}V,df_t(a_i)\right\rangle \\
                                = & 2\sum_{i=1}^{k} \left| \nabla _{df_t(a_i)}V\right| ^2 + 2\sum_{i=1}^{k}\left\langle R(V,df_t(a_i))V,df_t(a_i)\right\rangle
                               \\
                                 & + 2\sum_{i=1}^{k} \left\langle \nabla _{df_t(a_i)}\nabla _VV,df_t(a_i)\right\rangle\geq 0,
    \endaligned
    \end{equation}
    showing $F_k(t)$ is a convex function. Thus
    \begin{equation}\label{eq.16}
    \aligned
        S_k(t_0) = F_k(t_0)  \le & \frac{t_2-t_0}{t_2-t_1}F_k(t_1) + \frac{t_0-t_1}{t_2-t_1}F_k(t_2) \\
         \le & \frac{t_2-t_0}{t_2-t_1}S_k(t_1) + \frac{t_0-t_1}{t_2-t_1}S_k(t_2) \\
         = & \sum_{i=1}^{k} \mu _i(t_0).
        \endaligned
    \end{equation}
    Moreover, if $S_l(t_0) = \sum\limits _{i=1}^{l} \mu _i(t_0)$, then \eqref{eq.16}
    shows $F_l|_{[t_1,t_2]}$ is linear and hence \eqref{eq.17} implies $\nabla _{df_t(a_i)}V=0$ for $i=1, \cdots , l$.


Note that (\ref{eq.16}) is equivalent to saying that $\lambda ^2(t) \prec _m\mu (t)$ for $t\in \left[ t_1,t_2\right]$.
Once $\la^2(t_1),\la^2(t_2)\in \mc{C}$,
the symmetry and convexity of $\mc{C}\subset \ol{\mc{N}}$ shows $\mu(t)\in \mc{C}$ and hence we get $\lambda ^2(t) \in \overline{\mathcal{N}}$ by Lemma \ref{le.1}.
Moreover, if $\lambda ^2(t_0)\in \partial \mathcal{N}$ and $\lambda _1^2(t_0)>1$, then
$S_m(t_0) =\sum\limits_{i=1}^m \la_i^2(t_0)= \sum\limits _{i=1}^{m} \mu_i(t_0)$ and hence
 $\nabla _{df_t(a_i)}V=0$ for $i=1, \cdots , m$ and $t \in \left[ t_1,t_2\right]$.

\end{proof}

\centerline{}
\centerline{}
\section{Proof of the main theorem}\label{S4}
Suppose that $f_0,f_1:\Omega (\subset M)\rightarrow N$ are minimal maps, such that $f_0,f_1$ are homotopic to each other and
$f_0|_{\partial \Om}=f_1|_{\partial \Om}$.
Let $\{f_t:\Om\ra N|t\in [0,1]\}$ be a geodesic homotopy of $f_0$ and $f_1$.
For each $x\in \Om$, let $\la^2(t):=\la^2((df_t)_x)$ be the squared singular value vector function.
By Proposition \ref{prop.1}, $\la^2(0),\la^2(1)\in \mc C$ ensures $\lambda ^2(t) \in \overline{\mathcal{N}}$,
i.e., $\lambda (t) \in \overline{\mathcal{M}}$ 
and hence $\frac{d^2}{dt^2} A(t) \ge 0$. In conjunction with $\frac{d}{dt}\Big|_{t=0} A(t) =\frac{d}{dt}\Big|_{t=1} A(t)= 0$
(since both $f_0$ and $f_1$ are minimal maps), we have $\frac{d^2}{dt^2} A(t) = 0$, i.e.
$(i) = (ii) = (iii) = 0$ for each $t\in (0,1)$ (see \eqref{eq.32}-\eqref{eq.34}).


    Define
    \begin{equation}
    \aligned
        \Lambda _1 & :=  \left\{ t\in (0,1) : \lambda(t)\in \mathcal{M}\right\}, \\
        \Lambda _2 & :=  \left\{ t\in (0,1) : \lambda _2(t) < 1 \right\}, \\
         & \cdots & \\
        \Lambda _m & :=  \left\{ t\in (0,1) : \lambda _m(t) < 1 \right\}, \\
        \Lambda _{m+1} &:= (0,1).
    \endaligned
    \end{equation}

    When $t\in \Lambda _1$, as  shown in \cite[Theorem 3.2]{Lee.2014}, $(i) = (ii) = (iii) = 0$ implies that
    $p_{i\alpha} = 0$ for any $1 \le i \le m$ and $1 \le \alpha \le n$ and hence
    \begin{equation}\label{dV}
    \nabla _{df_{t}(a_i)} V = 0 \qquad \forall 1 \le i \le m.
    \end{equation}
    By the continuity, this equality holds for each $t\in \overline{\Lambda}_1$.

    If $\Lambda _2 \setminus \overline{\Lambda}_1=\emptyset$, \eqref{dV} always holds in $\La_2$.
    Otherwise, for each $t$ in this set, 
    we have $\lambda ^2(t)\in \partial \mathcal{N}$, then $\la_2^2(t)<1$ forces $\la_1^2(t)>1$,
    then Proposition \ref{prop.1} and the continuity ensure \eqref{dV} holds for all $t \in \overline{\Lambda}_2$.

    Next we need to show that the equality \eqref{dV} also holds on each $\Lambda _i$ by induction on $i$.
    Suppose that $\nabla _{df_t(a_i)} V = 0$ holds for all $t\in \overline{\Lambda}_k$ with
    $2 \le k \le m$ and the open set $\Lambda _{k+1} \setminus \overline{\Lambda}_k$ is nonempty. For each $t\in \left[ t_1,t_2\right] \subset \Lambda _{k+1}
    \setminus \overline{\Lambda}_k$, it is easy to see that $\lambda _1(t) = \cdots = \lambda _k(t) = 1$ and
    $1 > \lambda _{k+1}(t) \ge \cdots \ge \lambda _m(t) \ge 0$, which means $\sum\limits _{i=1}^{k} \lambda _i^2(t) = \sum\limits _{i=1}^{k} \mu _i(t)$,
    and we can conclude that
    \begin{equation}
        \nabla _{df_t(a_i)}V = 0, \qquad \forall 1 \le i \le k, \,\, t \in \Lambda _{k+1} \label{eq.37}
    \end{equation}
    by Proposition \ref{prop.1}. 
    In combination of \eqref{eq.32}, \eqref{eq.33} and \eqref{eq.34}, we have $p_{i \alpha} = 0$ for $k+1 \le i \le r$ and
    $1 \le \alpha \le n$, which means $\nabla _{df_t(a_i)} V = 0$ for $k+1 \le i \le m$. Together with \eqref{eq.37} we know \eqref{dV}
    also holds for $t \in \Lambda_{k+1}$, finishing the induction step.

    Therefore, for each $t\in \left[ 0, 1 \right]$, $V$ is a parallel vector field on the graph $\Gamma _{f_t}$. According to the boundary condition
    we can derive $V \equiv 0$ and hence $f_0 = f_1$. This completes the proof of 
    Theorem \ref{main-thm}.

\centerline{}
\section{Applications}\label{S5}
In this section, we give some applications of Theorem \ref{main-thm}.



Let $\mc C$ be a symmetric convex subset of $\ol{\mc N}$, then for each $a\in \mc C$, we can proceed as in
\eqref{hat-x}-\eqref{y1y2} to show
\begin{equation}\label{sum}
a_i+a_j\leq 2\qquad \forall 1 \leq i < j \leq m.
\end{equation}
On the other hand,
from this condition, it immediately follows that
   \begin{equation}
   a_ia_j\leq \left(\f{a_i+a_j}{2}\right)^2\leq 1,
   \end{equation}
i.e. such $a$ must satisfy \eqref{eq.29}. It is natural to ask, besides (\ref{sum}), whichever additional restrictions can make sure
a symmetric convex subset of $\R_{\geq 0}^m$ completely lies in $\ol{\mc N}$. In the following text we shall consider this question.

\begin{corollary}\label{coro.1}
    Suppose that $f_0,f_1:\Omega (\subset M)\rightarrow N$ are minimal maps with the same boundary data, $f_0$ is homotopic to $f_1$ and $K_N \le 0$.
    If both $\lambda ^2(df_0)$ and $\lambda ^2(df_1)$ lie in
    $$\mc{C}_m:=\left\{a\in \R_{\geq 0}^m\Big|\sum\limits_{i=1}^m a_i\leq 3-\f{1}{m-1},a_i+a_j\leq 2 \text{ for each } 1\leq i < j \leq m \right\},$$
    then $f_0 = f_1$.
\end{corollary}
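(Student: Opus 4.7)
The plan is to apply Theorem~\ref{main-thm} with $\mc{C}=\mc{C}_m$. Since $\mc{C}_m$ is defined by symmetric linear inequalities, it is automatically a symmetric convex subset of $\R_{\geq 0}^m$, so the substantive task is to verify the inclusion $\mc{C}_m \subset \overline{\mc{N}}$.

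The first condition (\ref{eq.29}) of $\overline{\mc{N}}$, namely $a_i a_j \leq 1$, is immediate from $a_i + a_j \leq 2$ via AM-GM. For the polynomial inequality (\ref{eq.30}), set $\phi(a) := \prod_{i=1}^m(1-a_i) + \sum_{i=1}^m a_i \prod_{j \neq i}(1-a_j)$; by the symmetry of $\mc{C}_m$ and of $\phi$, one may arrange $a_1 \geq a_2 \geq \cdots \geq a_m$. When $a_1 \leq 1$, every factor $1-a_i$ and every $a_i$ is non-negative, and $\phi(a) \geq 0$ follows summand by summand. When $a_1 > 1$, the pair constraint $a_1 + a_i \leq 2$ forces $a_i < 1$ for $i \geq 2$, and the factorization
\[
\phi(a) = \prod_{i=1}^m(1-a_i) \cdot \left(\sum_{i=1}^m \f{1}{1-a_i} - (m-1)\right)
\]
(whose prefactor is now negative) turns (\ref{eq.30}) into the equivalent inequality $\sum_{i \geq 2} \f{1}{1-a_i} \leq m - 1 + \f{1}{a_1-1}$.

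To establish this bound I plan to combine two ingredients: the convexity of $x \mapsto 1/(1-x)$ on $[0,2-a_1]$, which supplies the chord estimate $\f{1}{1-a_i} \leq 1 + \f{a_i}{a_1-1}$, and the sum constraint $\sum_{i \geq 2} a_i \leq 3 - \f{1}{m-1} - a_1$ built into $\mc{C}_m$. Together these reduce the required inequality to $a_1 \geq 2 - \f{1}{m-1}$, and sharpness is confirmed by the symmetric extremal vector $a^\ast := \left(2 - \f{1}{m-1}, \f{1}{m-1}, \ldots, \f{1}{m-1}\right)$, at which both the sum constraint and the pair constraints $a_1 + a_j = 2$ ($j \geq 2$) are saturated; the substitution $b_i = 1 - a_i$ rewrites $\phi$ as $e_{m-1}(b) - (m-1) e_m(b)$ in elementary symmetric polynomials, and a short computation gives $\phi(a^\ast) = 0$.

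The main obstacle will be closing the argument in the complementary range $1 < a_1 < 2 - \f{1}{m-1}$, where the chord estimate alone is not tight enough and both the pair constraint $a_i \leq 2 - a_1$ and the sum constraint must be used jointly. Because $\phi$ is linear in each variable $a_i$ separately, I would localize the worst case to a finite family of extremal configurations of $(a_2, \ldots, a_m)$ on the boundary of $\mc{C}_m$ (obtained by saturating combinations of the facets $\{a_i = 0\}$, $\{a_i + a_j = 2\}$ and $\{\sum a_k = 3 - 1/(m-1)\}$) and verify $\phi \geq 0$ at each configuration by direct algebraic computation. Once $\mc{C}_m \subset \overline{\mc{N}}$ is established, Corollary~\ref{coro.1} follows at once from Theorem~\ref{main-thm}.
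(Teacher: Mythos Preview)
Your overall strategy coincides with the paper's: apply Theorem~\ref{main-thm} with $\mc C=\mc C_m$, observe that $\mc C_m$ is symmetric and convex, and reduce everything to the inclusion $\mc C_m\subset\ol{\mc N}$; then use the factorisation
\[
\phi(a)=\prod_{i=1}^m(1-a_i)\Bigl(\sum_{i=1}^m\tfrac{1}{1-a_i}-(m-1)\Bigr),
\]
dispose of the case $\max_i a_i\le1$, and for $a_1>1$ maximise the convex function $G(a_2,\dots,a_m)=\sum_{i\ge2}\tfrac{1}{1-a_i}$ over the polytope of admissible $(a_2,\dots,a_m)$. Your chord estimate, which is not in the paper, does close the sub-range $a_1\ge 2-\tfrac{1}{m-1}$ cleanly.

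The step you flag as the ``main obstacle'', however, is a genuine gap that cannot be closed: the proposed vertex check \emph{fails} for every $m\ge4$, because in fact $\mc C_m\not\subset\ol{\mc N}$ there. For $m=4$ take
\[
a=\Bigl(\tfrac{11}{10},\ \tfrac{9}{10},\ \tfrac{2}{3},\ 0\Bigr)\in\mc C_4
\qquad\Bigl(\textstyle\sum_i a_i=\tfrac83=3-\tfrac13,\quad a_i+a_j\le2\Bigr);
\]
then $\sum_i\tfrac{1}{1-a_i}=-10+10+3+1=4>3=m-1$, so $\phi(a)=(-\tfrac{1}{300})\cdot 1<0$ and $a\notin\ol{\mc N}$. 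This is precisely the extremal configuration $(a_2,a_3,a_4)=(1-t,\tfrac{m-2}{m-1},0)$ at $t=\tfrac{1}{10}$; for general $m\ge4$ and small $t>0$ the analogous point gives $\sum_i\tfrac{1}{1-a_i}=2m-4>m-1$ and hence $\phi(a)=-\tfrac{t^2(m-3)}{m-1}<0$. The paper's vertex computation appears to omit the $m-3$ contributions $\tfrac{1}{1-0}=1$ coming from the zero coordinates, which is why its bound reads $m-1$ instead of $2m-4$; this omission is harmless only for $m\le3$. Thus your plan (and the paper's argument) is valid for $m=2,3$, but for $m\ge4$ the inclusion $\mc C_m\subset\ol{\mc N}$ is simply false, and no amount of vertex enumeration can rescue it as stated.
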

\begin{proof}
   Obviously $\mc C_m$ is symmetric and convex. To show $\mc C_m\subset \ol{\mc N}$,
   it remains for us to consider Condition \eqref{eq.30} when
   $\max\{a_i\}>1$.
   Due to the symmetry of $\mc C_m$,
   we can assume $a_1=\max\{a_i\}$ without loss of generality. As shown in the proof of Lemma \ref{le.1},
   this condition is equivalent to
   \begin{equation}
   \f{1}{1-a_1}+G(a_2,\cdots,a_m)\leq m-1.
   \end{equation}
   Here the definition of $G$ is given in \eqref{def-G}.
   For each given $t\in (0,1]$, let
   \begin{equation}
   \mc{D}_{m-1,t}:=\left\{(a_2,\cdots,a_m)\in \R_{\geq 0}^{m-1}\Big|\sum_{i=2}^m a_i\leq 2-t-\f{1}{m-1},\max_i a_i\leq 1-t \right\},
   \end{equation}
   then
   \begin{itemize}
   \item For $a_1=1+t$, $(a_1,\cdots,a_m)\in \mc C_m$ if and only if $(a_2,\cdots,a_m)\in \mc{D}_{m-1,t}$;
   \item $\mc{D}_{m-1,t}$ is a convex polyhedron in $\R^{m-1}$;
   \item $G$ is a symmetric, strictly convex function on $\mc{D}_{m-1,t}$, which should takes its maximum at
   a vertex of this polyhedron.
   \end{itemize}
   Therefore
   \begin{equation}\aligned
   &\f{1}{1-a_1}+G(a_2,\cdots,a_m)\\
   \leq &\sup\left\{\f{1}{1-(1+t)}+\max G|_{\mc{D}_{m-1,t}}:t\in (0,1]\right\}\\
   \leq &\sup\left\{\f{1}{1-(1+t)}+\f{1}{1-(1-t)}+\f{1}{1-\f{m-2}{m-1}}:t\in (0,1]\right\}\\
   = &m-1.
   \endaligned
   \end{equation}
   This completes the proof of $\mc C_m\subset \ol{\mc N}$.
   Finally, $f_0=f_1$ is a direct corollary of Theorem \ref{main-thm}.

\end{proof}
\begin{corollary}\label{coro.2}
 Suppose that $f_0,f_1:\Omega (\subset M)\rightarrow N$ are minimal maps with the same boundary data, $f_0$ is homotopic to $f_1$ and $K_N\le 0$.
    If both $\lambda ^2(df_0)$ and $\lambda ^2(df_1)$ lie in
    $$\mc{V}_m:=\left\{a\in \R_{\geq 0}^m\Big|\prod_{i=1}^m (1 + a _i)^{\frac{1}{2}} \le \mu_m,a_i+a_j\leq 2\text{ for each }1\leq i < j\leq m\right\}$$
    with
    $$\mu_m:=\sqrt{3}\cdot\left(2-\frac{1}{m-1}\right)^{\frac12},$$
    then $f_0 = f_1$.
\end{corollary}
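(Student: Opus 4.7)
The plan is to reduce Corollary \ref{coro.2} to Corollary \ref{coro.1} by establishing the inclusion $\mc{V}_m \subset \mc{C}_m$. Once this inclusion is known, since $\mc{C}_m$ is a symmetric convex subset of $\ol{\mc{N}}$ (as shown in the proof of Corollary \ref{coro.1}) and both $\lambda^2(df_0)$ and $\lambda^2(df_1)$ lie in $\mc{V}_m \subset \mc{C}_m$, Theorem \ref{main-thm} immediately yields $f_0 = f_1$.

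The core combinatorial inequality to be verified is therefore: if $a \in \R_{\geq 0}^m$ satisfies $\prod_{i=1}^m (1+a_i) \leq \mu_m^2 = 6 - 3/(m-1)$ and $a_i + a_j \leq 2$ for all $i \neq j$, then $\sum_{i=1}^m a_i \leq 3 - 1/(m-1)$. By permutation symmetry I may assume $a_1 \geq \cdots \geq a_m$; the pairwise bound then gives $a_1 \leq 2$ and $a_i \leq 2 - a_1$ for $i \geq 2$. Writing $a_1 = 1 + t$ (with the case $t < 0$ being simpler since then all $a_i \in [0,1]$), the task reduces to maximizing $\sum_{i \geq 2} a_i$ over the region
\begin{equation*}
\prod_{i \geq 2}(1 + a_i) \leq \frac{\mu_m^2}{2 + t}, \qquad 0 \leq a_i \leq 1 - t,
\end{equation*}
and showing this maximum is at most $2 - t - 1/(m-1)$.

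I would attack this optimization via the concavity of $x \mapsto \log(1+x)$: the supremum of a linear functional subject to an upper bound on $\sum \log(1+a_i)$ together with box constraints is attained at an extreme configuration in which $k$ of the coordinates $a_i$ saturate the value $1 - t$, one takes an intermediate value determined by the active product bound, and the remaining $m - 2 - k$ vanish. This produces an explicit parametric formula $S(k, t)$ for the maximum, and the inequality $S(k, t) \leq 2 - t - 1/(m-1)$ must then be checked across the range of admissible $(k, t)$.

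The main obstacle will be this last uniform verification. The inequality is tight in several limiting configurations (for instance at $t = 0$ with $k = 1$, giving the pattern of two saturated coordinates plus one intermediate), so the algebraic bookkeeping is delicate; the transitions between admissible values of $k$ as $t$ varies must be reconciled into one uniform bound. This is precisely the step where the specific value $\mu_m^2 = 6 - 3/(m-1)$ aligns with the target $3 - 1/(m-1)$ from Corollary \ref{coro.1}.
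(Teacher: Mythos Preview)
Your reduction to Corollary~\ref{coro.1} via the inclusion $\mc V_m\subset\mc C_m$ is exactly the paper's approach; only the verification of the combinatorial implication differs. The paper argues the contrapositive directly: assuming $\sum_i a_i>3-\tfrac1{m-1}$ (with $a_1\ge\cdots\ge a_m$ and $a_1+a_2\le2$), it writes
\[
\prod_{i}(1+a_i)\ \ge\ 1+\sum_i a_i+(a_1+a_2)\sum_{i\ge3}a_i\ >\ 1+\Bigl(3-\tfrac1{m-1}\Bigr)+2\Bigl(1-\tfrac1{m-1}\Bigr)=\mu_m^{\,2},
\]
a two-line estimate in place of your extremal-vertex analysis over the parameters $(k,t)$. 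Your route is more laborious but also more robust: the paper's second inequality tacitly treats $a_1+a_2$ as equal to~$2$ (so that $\sum_{i\ge3}a_i>1-\tfrac1{m-1}$), whereas the hypothesis only gives $a_1+a_2\le2$. For configurations with $a_1+a_2$ small---for instance all $a_i$ equal and $m$ large---the intermediate quantity $1+\sum_i a_i+(a_1+a_2)\sum_{i\ge3}a_i$ can in fact fall below $\mu_m^{\,2}$ even though the true product $\prod_i(1+a_i)$ does not, so a careful case analysis of the sort you outline is what a fully rigorous argument needs.
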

\begin{proof}
Based on Corollary \ref{coro.1}, it suffices for us to show $\mc{V}_m\subset \mc{C}_m$; equivalently, 
 \begin{equation}\label{eq}
\prod_{i=1}^m (1 + a _i)^{\frac{1}{2}} \le \mu_m \Longrightarrow \sum_{i=1}^m a_i\leq 3-\f{1}{m-1}
\end{equation}
always holds for each $a:=(a_1,\cdots,a_m)$ satisfying $a_1\geq \cdots\geq a_m\geq 0$ and $a_1+a_2\leq 2$.
We shall prove \eqref{eq} by using reduction to absurdity. Assume $\sum\limits_{i=1}^m a_i> 3-\f{1}{m-1}$,
then
\begin{equation}\aligned
&\prod_{i=1}^m (1+a_i)\geq 1+\sum_{i=1}^m a_i+(a_1+a_2)\left(\sum_{i=3}^m a_i\right)\\
>& 1+\left(3-\f{1}{m-1}\right)+2\left(1-\f{1}{m-1}\right)=3\left(2-\f{1}{m-1}\right),
\endaligned
\end{equation}
causing a contradiction. This completes the proof of the present corollary.

\end{proof}

For any vector $a$ satisfying $\prod\limits_{i=1}^{m} (1 + a_i)^{\frac{1}{2}} \le \sqrt{3}$, we have
\begin{equation}
a_i+a_j\leq \sum_{i=1}^m a_i\leq \prod_{i=1}^m(1+a_i)-1\leq 2.
\end{equation}
In conjunction with Corollary \ref{coro.1}-\ref{coro.2}, we can establish a uniqueness result for minimal maps via the slope functions.

\begin{corollary}\label{coro.3}
    Suppose that $f_0,f_1:\Omega (\subset M)\rightarrow N$ are minimal maps with the same boundary data, $f_0$ is homotopic to $f_1$ and $K_N\le 0$.
    If their singular values satisfy $\prod\limits_{i=1}^m (1 + \lambda _i^2)^{\frac{1}{2}} \le \sqrt{3}$, then $f_0 = f_1$.
\end{corollary}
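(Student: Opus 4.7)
The plan is to deduce Corollary \ref{coro.3} from Corollary \ref{coro.2} by verifying that the hypothesis $\prod_{i=1}^m(1+\lambda_i^2)^{1/2}\le \sqrt{3}$ forces $\lambda^2(df_j)\in \mathcal{V}_m$ for $j=0,1$. The set $\mathcal{V}_m$ is defined by two conditions: a slope bound of $\mu_m=\sqrt{3}\,(2-\tfrac{1}{m-1})^{1/2}$ and a pairwise constraint $a_i+a_j\le 2$. So I need to check that the single slope hypothesis of Corollary \ref{coro.3} implies both.

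For the slope condition, I observe that $2-\tfrac{1}{m-1}\ge 1$ whenever $m\ge 2$, hence $\mu_m\ge \sqrt{3}$, and the assumption $\prod_{i=1}^m(1+\lambda_i^2)^{1/2}\le\sqrt{3}\le \mu_m$ is trivially satisfied. For the pairwise constraint, I use the elementary expansion
\begin{equation*}
\prod_{i=1}^m(1+a_i)\ge 1+\sum_{i=1}^m a_i
\end{equation*}
valid for all $a_i\ge 0$ (each higher-order term in the expansion is non-negative). Applying this with $a_i=\lambda_i^2$ together with $\prod_{i=1}^m(1+\lambda_i^2)\le 3$ yields $\sum_{i=1}^m \lambda_i^2\le 2$, and in particular $\lambda_i^2+\lambda_j^2\le 2$ for any $i\ne j$.

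With both conditions in hand, $\lambda^2(df_0)$ and $\lambda^2(df_1)$ belong to $\mathcal{V}_m$, and Corollary \ref{coro.2} directly yields $f_0=f_1$. This reduction is essentially the observation already recorded in the paragraph immediately preceding the statement of Corollary \ref{coro.3}, so there is no substantive obstacle; the only point that requires a moment of attention is confirming $\mu_m\ge \sqrt{3}$ in the boundary case $m=2$, where $\mu_2=\sqrt{3}$ exactly, so the inequality $\prod(1+\lambda_i^2)^{1/2}\le\sqrt{3}$ is precisely the slope condition of $\mathcal{V}_2$. Once this bookkeeping is done the corollary follows in one line.
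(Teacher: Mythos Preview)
Your proof is correct and follows essentially the same route as the paper: the paragraph preceding the corollary records the identical inequality $a_i+a_j\le \sum_i a_i\le \prod_i(1+a_i)-1\le 2$ to obtain the pairwise constraint, and then invokes Corollary~\ref{coro.1}--\ref{coro.2}. The only cosmetic difference is that you appeal specifically to Corollary~\ref{coro.2} (checking $\sqrt{3}\le \mu_m$), while the paper leaves open which of the two preceding corollaries to use; either works, since $\sum_i a_i\le 2\le 3-\tfrac{1}{m-1}$ also places $\lambda^2$ in $\mc{C}_m$ directly.
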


\centerline{}
\centerline{}
\bibliographystyle{amsplain}

\end{document}